\theoremstyle{definition}
\newcommand\ddfrac[2]{\frac{\displaystyle #1}{\displaystyle #2}}
\newcommand{\pwr}[1]{2^{2^{#1}}}
\newcommand{\wir}{\textrm{wir}}
\newcommand{\volx}[1]{(\pwr{2(#1)}f(#1)+1)f(#1)}
\newcommand{\voly}[1]{(\pwr{2#1+1}f(#1)+1)f(#1)}
\newtheorem{theorem}{Theorem}[section]
\newtheorem{definition}[theorem]{Definition}
\newtheorem{lemma}[theorem]{Lemma}
\title{Dependence of the coarse wiring profile on the choice of parameter}
\author{Ruth Raistrick}
\date{\today}
\begin{document}

\maketitle

\begin{abstract}
There are two subgraphs $X,Y$ of the $2$-dimensional integer grid such that for any $k<l$ there is an infinite subset $I\subset \mathbb{N}$ such that the $k$-wiring profile of $X$ into $Y$ grows quadratically on $I$ while the $l$-wiring profile of $X$ into $Y$ grows linearly on $I$. This resolves Question 7.8 of Barrett-Hume.
\end{abstract}


\section{Introduction}
In \cite{Barrett-Hume}, the authors introduce coarse wirings as a coarse geometric version of thick embeddings defined by Kolmogorov-Barzdin and later considered by Gromov-Guth \cite{KB,GG}.

\begin{definition}\label{defn:kwiring} Let $\Gamma$ be a finite graph and let $Y$ be a graph. A \textbf{wiring} of $\Gamma$ into $Y$ is a continuous map $f:\Gamma\to Y$ which maps vertices to vertices and edges to unions of edges.

A wiring $f$ is a \textbf{coarse $k$-wiring} if
\begin{enumerate}
 \item the restriction of $f$ to $V\Gamma$ is $\leq k$-to-$1$, i.e.\ $|\{v\in V\Gamma\mid f(v)=w\}|\leq k$ for all $w\in VY$; and
 \item each edge $e\in EY$ is contained in at most $k$ of the paths in $\mathcal P$.
\end{enumerate}
We consider the \textbf{image} of a wiring $\textrm{im}(f)$ to be the subgraph of $Y$ consisting of all vertices and edges in the image of $f$. The \textbf{volume} of a wiring $\textrm{vol}(f)$ is the number of vertices in its image.  We denote by $\wir^k(\Gamma\to Y)$ the minimal volume of a coarse $k$-wiring of $\Gamma$ into $Y$. If no such coarse $k$-wiring exists, we say $\wir^k(\Gamma\to Y)=+\infty$.

Let $X$ and $Y$ be graphs. The $k$-\textbf{wiring profile} of $X$ into $Y$ is the function
\[
 \wir^k_{X\to Y}(n) = \max\{\wir^k(\Gamma\to Y)\mid\Gamma\leq X,\ |\Gamma|\leq n\}.
\]
\end{definition}

For the graphs considered in \cite{Barrett-Hume} $\wir^k_{X\to Y}(n)$ does not depend (up to a natural asymptotic equivalence of functions) on the parameter $k$, provided it is chosen to be sufficiently large. We recall that given two functions $f,g:\mathbb{N}\to\mathbb{N}$ we write $f\lesssim g$ if there exists some constant $C$ such that $f(n)\leq Cg(Cn)+C$ holds for every $n$. We write $f\simeq g$ if $f\lesssim g$ and $g\lesssim f$.

The authors ask whether there exist pairs of graphs where for every $k$, there is some $l>k$ such that $\wir^k_{X\to Y}(n)\not\simeq\wir^l_{X\to Y}(n)$ \cite[Question 7.8]{Barrett-Hume}. The goal of this paper is to prove that there are.

\begin{theorem}\label{thm:main} There are two subgraphs $X,Y$ of the $2$-dimensional integer grid such that, for every $k\geq 2$ there is an infinite subset $I_k\subset \mathbb{N}$ such that for all $n\in I_k$
\begin{itemize}
	\item[(a)] $\wir^{k-1}_{X\to Y}(n) \geq \left(\frac{n}{k(k+1)}\right)^2$, and
	\item[(b)] $\wir^k_{X\to Y}(n) \leq 2n$.
\end{itemize}
\end{theorem}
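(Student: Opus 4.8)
The plan is to take $X=\bigsqcup_i X_i$ and $Y=\bigsqcup_i Y_i$, disjoint unions placed in pairwise disjoint boxes of the grid, where the $i$-th ``gadget pair'' $(X_i,Y_i)$ carries a threshold $k_i\ge 2$ with every integer $\ge 2$ occurring as some $k_i$ infinitely often, and where the sizes $\vol(X_i)=\volx{i}$ and $\vol(Y_i)=\voly{i}$ grow doubly-exponentially in $i$ (this is the role of the $\pwr{\cdot}$-calibration), so fast that $\sum_{j<i}\vol(X_j)^{2}\le\vol(X_i)$. Writing $f(i)=k_i(k_i+1)$, I would build $X_i$ around a ``bottleneck'' through which $\sim k_i$ parallel strands must pass, and make $Y_i$ an \emph{obstructed corridor} — essentially one-dimensional, of length calibrated to $\voly{i}$, riddled with obstacles tuned to $X_i$ — so that:
\begin{enumerate}
\item[(UB)] there is a coarse $k_i$-wiring of $X_i$ into $Y_i$ of volume $\le\vol(X_i)$: the ``honest'' wiring that pushes all $k_i$ strands through the bottleneck together and threads the obstacles in $k_i$-fold bundles;
\item[(poly)] every connected subgraph $\Gamma'$ of every $X_j$ admits, for every $k\ge 2$, some (possibly very wasteful) coarse $k$-wiring into $Y$ of volume $\le|\Gamma'|^{2}$, and of volume $\le C|\Gamma'|$ when $\Gamma'\subsetneq X_j$ is proper — a proper piece escaping the \emph{global} obstruction that makes $X_i$ itself hard;
\item[(LB)] every coarse $(k_i-1)$-wiring of $X_i$ into $Y$ has volume $\ge\bigl(\vol(X_i)/(k_i(k_i+1))\bigr)^{2}\ \bigl(\approx\voly{i}\bigr)$.
\end{enumerate}

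Granting (UB), (poly), (LB), set $I_k=\{\vol(X_i):k_i=k\}$, which is infinite. Fix $k\ge 2$ and $n=\vol(X_i)\in I_k$, so $k_i=k$. For (a) take $\Gamma=X_i\le X$: then $|\Gamma|\le n$ and (LB) gives $\wir^{k-1}_{X\to Y}(n)\ge\wir^{k-1}(X_i\to Y)\ge(n/(k(k+1)))^{2}$. For (b) let $\Gamma\le X$ with $|\Gamma|\le n$ and decompose $\Gamma=\bigsqcup_j\Gamma_j$ with $\Gamma_j=\Gamma\cap X_j$; since $Y$ has arbitrarily large disjoint corridors, optimal coarse $k$-wirings of the $\Gamma_j$ can be realised with disjoint images, so $\wir^{k}(\Gamma\to Y)\le\sum_j\wir^{k}(\Gamma_j\to Y)$. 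A full gadget $\Gamma_j=X_j$ forces $j\le i$ (as $\vol(X_j)\le|\Gamma|\le\vol(X_i)$); the term $j=i$ is $\le\vol(X_i)=n$ by (UB), the terms $j<i$ are $\le\vol(X_j)^{2}$ by (poly) and sum to $\le n$ by the growth, and the proper pieces contribute $\le C\sum_j|\Gamma_j|\le C|\Gamma|\le Cn$ by (poly). Tracking the constants (which is exactly what pins the clean bound) gives $\wir^{k}_{X\to Y}(n)\le 2n$.

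The main obstacle is (LB): forcing any $(k_i-1)$-wiring of $X_i$ to be quadratically large. The mechanism is that with only $(k_i-1)$-fold overlap the $\sim k_i$ strands at the bottleneck cannot all be routed through together, so the wiring must ``unfold'' $X_i$; but the only available room is the essentially one-dimensional obstructed corridor into which the connected image of $X_i$ is trapped, and unfolding an object whose relevant linear extent is $\sim\vol(X_i)/f(i)$ into a line costs a square's worth of vertices. To make this precise I would run an isoperimetry/cut argument: a coarse $(k_i-1)$-wiring induces, at each point of the corridor, a cut of $X_i$ of size at most $(k_i-1)$ times the corridor's local width, and $X_i$ together with the obstacle pattern is designed so that every such ``thin'' cut leaves the two sides of $X_i$ spread out in the image; summing the resulting lower bounds on occupied points over the whole corridor yields $\ge(\vol(X_i)/f(i))^{2}$, and the lengths are set precisely so that $\voly{i}$ is, up to lower-order terms, the square of $\volx{i}/f(i)$, i.e.\ $\voly{i}\approx(\pwr{2i}f(i))^{2}=\pwr{2i+1}f(i)^{2}$. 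Alongside this one must check the routine points: that disjointness of the boxes forces the image of a connected subgraph into a single corridor (so (LB) is not beaten by splitting $X_i$ across several $Y_j$), that the wiring of (UB) attains multiplicity exactly $k_i$ rather than $k_i-1$, and the doubly-exponential bookkeeping in (b) — all of which the $\pwr{\cdot}$, $\volx{\cdot}$, $\voly{\cdot}$ calibrations are arranged to make automatic.
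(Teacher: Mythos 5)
There is a genuine gap: your argument is a specification of desiderata rather than a proof, and the one item that carries all the difficulty, (LB), is never established. You do not actually construct the gadgets --- ``bottleneck'', ``obstructed corridor'', ``obstacle pattern tuned to $X_i$'' are left undefined --- and the mechanism you sketch for (LB) is not the right one. In the paper the lower bound has nothing to do with an isoperimetry/cut computation: $X_n$ is a width-$f(n)$ strip whose rungs occur only every $\pwr{2n}$ steps, $Y_m$ is its subdivision with rungs every $\pwr{2m+1}$ steps, and the argument is (i) a connected image lands in a single $Y_m$, with $m\geq n$ forced by a pigeonhole on vertex multiplicity; (ii) the smallest cycle of $Y_m$ has about $2\cdot\pwr{2m+1}$ vertices, so any wiring of volume below that has a tree as image; (iii) an $(f(n)-1)$-wiring of $X_n$ into a tree is impossible outright, because some horizontally adjacent pair has distinct images and then all $f(n)+1$ parallel paths of $X_n$ between them must be squeezed onto the unique tree path joining the images, giving an edge of multiplicity $f(n)+1$. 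The quadratic bound is then just the girth, via the calibration $\pwr{2n+1}=(\pwr{2n})^2$. Your heuristic that ``unfolding into a line costs a square's worth of vertices'' is false as stated --- wiring a width-$k$ object of this kind into a line (or any tree) with multiplicity $k-1$ costs not quadratic volume but is impossible --- and you give no obstacle pattern, no precise cut inequality, and no summation that would substitute for the girth argument. Relatedly, your only safeguard against $X_i$ escaping to another component is connectivity; you never explain why a much larger corridor $Y_j$, $j>i$, is still expensive for a $(k_i-1)$-wiring of $X_i$, which in the paper is exactly what step (ii) handles because the girth of $Y_m$ grows with $m$. A small but telling symptom: your target $\voly{i}\approx(\volx{i}/f(i))^2$ is off by a factor $f(i)^2$; the true lower bound is $\approx\pwr{2i+1}$, not $\approx\voly{i}$.

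The surrounding reduction is broadly in the paper's spirit and mostly fine: taking $I_k=\{\volx{n}\mid f(n)=k\}$, testing (a) on the single subgraph $X_i$, and proving (b) by decomposing $\Gamma$ into its intersections with the components and exploiting doubly exponential growth is exactly the paper's bookkeeping. Even here, though, the step ``optimal coarse $k$-wirings of the $\Gamma_j$ can be realised with disjoint images'' is not automatic from ``$Y$ has arbitrarily large disjoint corridors'': relocating an optimal wiring to a different corridor need not preserve its volume, and the paper avoids the issue by using explicit wirings (the subdivision $1$-wiring of $\Gamma_j\subseteq X_{n_j}$ into $Y_{n_j}$, and for the top component the vertical-collapse $f(n)$-wiring onto a single column), whose images lie in distinct components by construction. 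So even granting some gadget construction, you would still need to exhibit the concrete wirings for (UB) and (poly); as it stands, nothing in the proposal can be checked.
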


The remainder of the paper is split into three short sections. The first contains the definitions of the graphs $X$ and $Y$, the second contains the proof of part (a) of the main theorem, while the third contains the proof of part (b).

\subsection*{Acknowledgements}
This research was conducted during a summer research programme funded by the University of Bristol under the supervision of Dr David Hume.

%

\section{Defining the graphs $X$ and $Y$}
The following two graphs will be crucial for the proof of Theorem \ref{thm:main} as they will be the examples as needed.
\begin{definition}
Define $f:\mathbb{N} \to \mathbb{N}$ as a surjective function such that $\forall n \in \mathbb{N}\backslash \{1\},$ $|f^{-1}(n)|=\infty$ and $2\leq f(n) \leq n.$ $f(1)=1.$
\end{definition}
For example $f(n)$ (for $n\geq 2$) could be the enumeration of the smallest prime in the prime decomposition of $n$ plus $1$ where the primes are enumerated by their order. i.e. $2=p_1,$ $3=p_2,$ $\ldots$. 
\begin{definition}
We define the graph $X_n=(VX_n,EX_n)$ to be a graph with $\volx{n}$ vertices where $VX_n=\{0_n,1_n,2_n, \ldots, f(n)-1_n \} \times \{0_n,1_n,2_n, \ldots, 2^{2^{2n}}f(n)_n\}$ and 
\begin{multline*}
    EX_n = \{ e | e=(i_n, j_n)(i_n,j+1_n) \: 0 \leq j \leq 2^{2^{2n}}f(n)-1\} \cup \\ \{ e | e=(i_n,j_n)(i+1_n,j_n) \: j = \pwr{2n}m, \text{ for some } m \in \mathbb{N}\cup \{0\}  \}.
\end{multline*}
We define $X$ to be the graph whose components are these graphs. i.e. $$X=\bigcup_{n=1}^\infty X_n$$
\end{definition}
We now define $Y$ to be a very similar graph.
\begin{definition}
We define the graph $Y_n=(VY_n,EY_n)$ to be a graph with $\voly{n})$ vertices where $VY_n=\{0_{n_y},1_{n_y},2_{n_y}, \ldots, f(n)-1_{n_y} \} \times \{0_{n_y},1_{n_y},2_{n_y}, \ldots, 2^{2^{2n+1}}f(n)_{n_y}\}$ where we drop the second subscript where it is explicitly clear we are working in  some $Y_n$, and  
\begin{multline*}
    EY_n = \{ e | e=(i_n, j_n)(i_n,j+1_n) \: 0 \leq j \leq 2^{2^{2n+1}}f(n)-1\} \cup \\ \{ e | e=(i_n,j_n)(i+1_n,j_n) \: j = (\pwr{2n+1}m) \text{ for some } m \in \mathbb{N}\cup \{0\}  \}.
\end{multline*}
We define $Y$ to be the graph whose components are these graphs. i.e. $$Y=\bigcup_{n=1}^\infty Y_n$$
\end{definition}

The set $I_k$ in the proof of Theorem \ref{thm:main} will be $\{\volx{n}\mid f(n)=k\}$.

\section{Lower bounds for $(f(n)-1)$-wirings}
We now provide a proof of part (a) of Theorem \ref{thm:main}.
%

\begin{lemma}
Where $f(n)\geq 2$ (equivalently $n\neq1$) any $(f(n)-1)$-wiring of $X_n$ into $Y$ has volume at least $2\cdot2^{2^{2n+1}}+1.$ Hence,
\[
\wir^{f(n)-1}_{X\to Y}(\volx{n}) \geq 2^{2^{2n+1}} \geq \left(\frac{\volx{n}}{f(n)(f(n)+1)}\right)^2.
\]
\end{lemma}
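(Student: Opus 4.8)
The plan is to reduce everything to one geometric claim -- that the image of the wiring cannot be a forest -- and to prove that claim by combining the doubly-exponential size gap with a congestion argument against the rich connectivity of $X_n$.

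First, dispose of the arithmetic. Expanding $\volx{n}$, one has $\volx{n}/\bigl(f(n)(f(n)+1)\bigr)=\bigl(2^{2^{2n}}f(n)+1\bigr)/(f(n)+1)\le 2^{2^{2n}}$, so $\left(\frac{\volx{n}}{f(n)(f(n)+1)}\right)^2\le (2^{2^{2n}})^2=2^{2^{2n+1}}$; this is the second displayed inequality. For the first, by the definition of $\wir$ it is enough to show that \emph{every} coarse $(f(n)-1)$-wiring $\phi$ of $X_n$ into $Y$ has $\vol(\phi)\ge 2\cdot 2^{2^{2n+1}}+1$ (then take $\Gamma=X_n$). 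Since $X_n$ is connected, $\im(\phi)$ is a subgraph of a single component $Y_m$ of $Y$.

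Next I would show $m\ge n$. The $(f(n)-1)$-to-$1$ condition on vertices gives $\vol(\phi)=|V\im(\phi)|\ge |VX_n|/(f(n)-1)$, while $\im(\phi)\subseteq Y_m$ gives $\vol(\phi)\le |VY_m|$; comparing the sizes $\volx{n}$ and $\voly{m}$, and using $f(m)\le m$, rules out $m\le n-1$ because $\voly{m}$ is then far too small. Consequently the girth of $Y_m$ is at least $2\cdot 2^{2^{2m+1}}+2\ge 2\cdot 2^{2^{2n+1}}+2$: a shortest cycle of $Y_m$ bounds a single grid square, whose perimeter is twice the vertical rung-spacing $2^{2^{2m+1}}$ of $Y_m$, plus the two rung edges.

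Now the heart of the argument. Suppose $\vol(\phi)<2\cdot 2^{2^{2n+1}}+1$; by the previous step this is below the girth of $Y_m$, so $T:=\im(\phi)$ contains no cycle and, being connected, is a tree. The key point is that $X_n$ is richly connected in two transverse ways: any two of its $f(n)+1$ rungs are joined by $f(n)$ pairwise vertex-disjoint paths (the columns restricted to the rows in between), and any two of its $f(n)$ columns are joined by $f(n)+1$ pairwise vertex-disjoint paths (the rungs restricted to the columns in between). In a tree, if the images of two rungs (resp.\ two columns) were disjoint subtrees, the unique $T$-path between them would be contained in the image of every one of these disjoint paths, so each of its edges would lie in at least $f(n)$ (resp.\ $f(n)+1$) of the paths $\mathcal P$, contradicting the congestion bound $f(n)-1$. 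Hence all $2f(n)+1$ subtrees $\{\phi(R_a)\}\cup\{\phi(P_i)\}$ pairwise intersect (a rung and a column share a grid vertex), so by the Helly property for subtrees of a tree they have a common point $z$ in $T$. Since at most $f(n)-1$ vertices of $X_n$ map to $z$ and the rungs, as well as the columns, are pairwise vertex-disjoint, at least two rungs and at least one column contain no vertex mapped to $z$; the images of the corresponding pairwise distinct edges of $X_n$ pass through $z$ in their interior. A local count at $z$ -- using that $z$ has degree at most $4$ in $Y_m$, that each incident edge of $Y_m$ is used at most $f(n)-1$ times, that each interior passage through $z$ consumes two incident edges, and including the edges incident to vertices mapped to $z$ -- then forces a violation of the congestion/multiplicity conditions. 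This contradiction gives $\vol(\phi)\ge 2\cdot 2^{2^{2n+1}}+1$.

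I expect the main obstacle to be exactly this last local count. The arithmetic, the reduction to ``$\im(\phi)$ is not a forest'', and the ``bottleneck-edge'' congestion argument are clean; but making the counting at the common point $z$ airtight for every $f(n)\ge 2$ is where the fine numerology of the construction must be used -- the interplay of the $f(n)+1$ rungs, the $(f(n)-1)$-to-$1$ bound, and $\deg_{Y_m}\le 4$ -- and it is precisely what separates congestion $f(n)-1$ (impossible, forcing quadratic volume) from congestion $f(n)$ (possible, giving the linear bound of part (b)).
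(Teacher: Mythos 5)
Your reduction steps match the paper: the arithmetic, the pigeonhole argument forcing the image component $Y_m$ to have $m\ge n$, and the girth argument showing $\im(\phi)$ is a tree are all essentially the paper's proof, and your disjointness/Helly observation (images of rungs and columns pairwise intersect in the tree, hence share a vertex $z$) is correct as far as it goes. The genuine gap is exactly the step you flagged: the ``local count at $z$'' does not force a contradiction, and in fact no purely local count at the Helly point can. Concretely, suppose $\phi^{-1}(z)$ consists of the $f(n)-1$ vertices $(0,0),\dots,(f(n)-2,0)$ of the bottom rung. Then one rung and $f(n)-1$ columns reach $z$ through vertices, so only $f(n)$ rungs and $1$ column need interior passages ($2$ edge-incidences each), and the boundary edges of this preimage set contribute only about $f(n)$ more incidences: the total demand at $z$ is roughly $3f(n)+2$, while the capacity at a degree-$4$ vertex of $Y_m$ is $4\bigl(f(n)-1\bigr)$. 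For $f(n)\ge 6$ there is no violation, so the sentence ``then forces a violation of the congestion/multiplicity conditions'' is not merely unproven but false as a local statement; the fine numerology you hoped would rescue it goes the wrong way.

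The paper closes the argument differently, and you may want to adopt its endgame: since a single rung has $f(n)$ vertices and the wiring is at most $(f(n)-1)$-to-$1$, some rung contains two \emph{adjacent} vertices $v_0,v_0'$ with $g(v_0)\ne g(v_0')$. In $X_n$ there are $f(n)+1$ pairwise edge-disjoint paths from $v_0$ to $v_0'$ (one through each rung, plus the edge $v_0v_0'$ itself). Each of these maps to a walk joining $g(v_0)$ to $g(v_0')$, hence containing the unique, nonempty tree path between them in $\im(\phi)$; in particular its first edge $e_1$ lies in the image of at least one edge from each of the $f(n)+1$ edge-disjoint paths, so $e_1$ lies in at least $f(n)+1>f(n)-1$ members of $\mathcal P$. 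This uses only edge-disjointness and the uniqueness of tree paths, and avoids any degree count at a single vertex of $Y_m$, which is precisely what your version cannot control.
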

\begin{proof}
Fix $n \in \mathbb{N}$. Suppose, for the sake of contradiction that there exists a $(f(n)-1)$-wiring of $X_n$ into $Y$ with volume less than $2\cdot 2^{2^{2n+1}}+1$, call this $g.$ It is clear that as $X_n$ is connected $g$ must send it to one connected component of $Y$, i.e. $Y_m$ for some natural $m$. 
\\
We first aim to show that $m\geq n.$ Suppose $m<n,$ as they're both integers $2m+1<2n$ which gives that 
$$2^{2^{2m+1}}<2^{2^{2n}}.$$
So as $|Y_m| = (2^{2^{2m+1}}f(m)+1)f(m)$ by the pigeonhole principle there exists $v \in EY_m$ such that 
\begin{align*}
    |g^{-1}(v)| &\geq \ddfrac{(2^{2^{2n}}f(n)+1)f(n)}{(2^{2^{2m+1}}f(m)+1)f(m)} 
\end{align*}
So to show that $g$ is not a $(f(n)-1)-$wiring it suffices to show
$$\ddfrac{(2^{2^{2n}}f(n)+1)f(n)}{(2^{2^{2m+1}}f(m)+1)f(m)} > f(n)-1$$ and as $$\ddfrac{(2^{2^{2n}}f(n)+1)f(n)}{(2^{2^{2m+1}}f(m)+1)f(m)} >\ddfrac{\pwr{2n}f(n)^2}{\pwr{2m+1}f(m)^2}$$ to show this it suffices to show 
$$\ddfrac{\pwr{2n}f(n)^2}{\pwr{2m+1}f(m)^2}>f(n)-1.$$
Supposing the contrary gives
\begin{gather*}
    2^{2^{2n}-2^{2m+1}}\mfrac{f(n)^2}{f(m)^2} \leq f(n)-1 \\
    \implies 2^{2^{2n}-2^{2m+1}} f(n)^2 \leq f(m)^2(f(n)-1) \\
    \implies 2^{2^{2n}-2^{2m+1}} f(n) \leq f(m)^2 - \mfrac{f(m)^2}{f(n)}
\end{gather*}
It is not hard to see that $f(m)^2 - \mfrac{f(m)^2}{f(n)}$ is maximised when $f(m)^2$ is and when $f(n)$ is, so as $f(k)\leq k$ $\forall k \in \mathbb{N},$ $f(m)^2 - \mfrac{f(m)^2}{f(n)}\leq m^2(1-\frac{1}{n}).$ We also have that $f(n)\geq 1.$ Combining these facts we get:
\begin{gather*}
    2^{2^{2n}-2^{2m+1}} \leq 2^{2^{2n}-2^{2m+1}} f(n) \leq f(m)^2(1 - \frac{1}{f(n)}) \leq m^2(1-\frac{1}{n}) \leq m^2 \\
    \implies 2^{2^{2n}-2^{2m+1}} \leq m^2 
\end{gather*}
Now $$2^{2^{2n}-2^{2m+1}}=2^{2^{2m+1}(2^{2(n-m)-1}-1)}$$
and as argued previously, as $n,m \in \mathbb{N}$ and $m<n,$ $2(n-m)-1\geq 1$ hence $2^{2(n-m)-1}-1\geq 2^1-1=1.$ So
$$m^2 \geq 2^{2^{2n}-2^{2m+1}}\geq 2^{2^{2m+1}}$$
But clearly $\forall m \in \mathbb{N}$ $m^2 < 2^{2^{2m}},$ hence we have a contradiction and so $m\geq n.$
\\
So let $m\geq n$. The smallest cycle in $Y_m$ is of volume $2\cdot \pwr{2m+1}+2.$ (This is as for a cycle we need two complete `vertical' paths -  abusing notation paths of the form $(i_m, k\pwr{2m+1}_m),(i_m, k\pwr{2m+1}+1_m), \ldots(i_m,(k+1)\pwr{2m+1}_m)$ - and the least `distance' between any two is only 1 edge at the `top' and `bottom' so the two lots of $\pwr{2m+1}+1$ on the vertical paths and no more vertices give $2\cdot \pwr{2m+1}+2$ vertices). And $\pwr{2m+1}\geq \pwr{2n+1}$ $\implies$ $2\cdot \pwr{2m+1}+2>2\cdot \pwr{2n+1}$. Therefore $g(X_n)$ cannot contain a cycle, hence it is acyclic and as previously stated connected, so $g(X_n)$ is a tree. 
Now consider the `bottom' row of $X_n$ (i.e. the vertices of the form $(i_n,0_n)$) as we have an $f(n)-1$ wiring and there are $f(n)$ of these there must exist $v_0,v_0'$ of this form such that $g(v_0)\neq g(v'_0)$ and $v_0v'_0\in EX_n$ (i.e. wlog $v'_0=v_0+\mathbf{e_1}).$ 

To see that this is true suppose no such $v_0, v'_0$ exist, this implies all pairs vertices one apart horizontally in $X_n$ are mapped to the same point under $g,$ but all vertices along a horizontal line are one apart from another vertex along the line so this immediately gives all of the form $(i_n,0_n)$ are sent to the same vertex under $g,$ a clear contradiction as there are $f(n)$ of them and $g$ is a $(f(n)-1$)- wiring.
\\ 
Let us replicate this for each `row' of $X_n$ so $\forall u \in \{0,1, \ldots, f(n)-1 \}$ we have $v_u,v'_u$ so $v'_u=v_u+\mathbf{e_1}$ and $g(v_u)\neq g(v'_u).$
So let's consider the image of the edge between $v_0$ and $v'_0$ under $h$. As $g(v)\neq g(v')$ $g(v_0v'_0)$ is a path or edge say $e_1,e_2,\ldots,e_p$ where the `starting' point of $e_1$ is $g(v_0)$ and ending point of $e_p$ is $g(v'_0).$ As $g(X_n)$ is acyclic this path is the only such linking $g(v_0)$ to $g(v'_0)$. In $X_n$ we can see there are $\geq f(n)+1$ distinct paths between $v_0$ and $v'_0$ (going through all $f(n)+1$ of the horizontals, i.e. through the path $p=(0_n,m\pwr{2n}_n),(1_n,m\pwr{2n}_n),\ldots,(f(n)-1_n,m\pwr{2n}_n)$ for all $m \in \{1,2,\ldots, f(n) \}$ and the most direct path between $v_0$ and $v'_0$ which consists only of edges with start and end point with $0_n$ as the `y-coordinate') all of which must be mapped to a path/ edge (but not single vertex as $g(v)\neq g(v')$) in $g(X_n)$ which links $g(v)$ to $g(v')$. But we've just seen there is only one such path so all $f(n)+1$ paths must be sent to the same path, namely $g(v_0v'_0).$ 

We also note that if we enumerate these distinct paths in $X_n$, the $0$th path being $v_0v'_0$, the $m$th being the one that passes through $(0_n,m\pwr{2n}_n),(1_n,m\pwr{2n}_n),\ldots,(f(n)-1_n,m\pwr{2n}_n)$ we have that $v_uv'_u$ is in the $u$th path for all $0\leq u \leq f(n).$ Hence $g(v_uv'_u)$ is in $g(v_0v'_0)$ for all such $u$ (i.e. the path $g(v_uv'_u)$ is entirely contained in the path $g(v_0v'_0)$). Now change 0 to 1 and using the identical argument get that $g(v_uv'_u)$ is in $g(v_1v'_1)$ for all $u$, in fact we replicate the argument for all $v_mv'_m$ and get that the path $g(v_uv'_u)$ is entirely contained in the path $g(v_mv'_m)$ for all $u,m \in \{0,1, \ldots, f(n)\}.$
This implies $g(v_0v'_0)=g(v_1v'_1)=\ldots=g(v_{f(n)-1}v'_{f(n)-1}).$ 

Now we've seen $e_1$ is in this path so taking $e_1$ we get $e_1 \in g(v_uv'_u)$ for all such $u,$ hence 
$$|g^{-1}(e_1)|\geq f(n)+1$$
a direct contradiction to $g$ being a $(f(n)-1)$-wiring.
\end{proof}

\section{Upper bounds for $f(n)$-wirings}
We finish by proving part (b) of Theorem \ref{thm:main}. We start with a simple observation.

\begin{lemma}\label{lemma-subdivision} There is a $1$-wiring of any subgraph of $X_n$ into $Y$ with volume at most $\voly{n}$.
\end{lemma}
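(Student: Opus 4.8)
The plan is to construct an explicit $1$-wiring of $X_n$ into the corresponding component $Y_n$, which has exactly $\voly{n}$ vertices, and then observe that any subgraph of $X_n$ is wired with volume at most that of $X_n$. First I would note that $X_n$ and $Y_n$ have the same ``width'': both have first coordinate ranging over $\{0,\dots,f(n)-1\}$. The key dimensional fact is that the vertical extent of $X_n$ is $2^{2^{2n}}f(n)$ while that of $Y_n$ is $2^{2^{2n+1}}f(n)$, and since $2^{2n+1}>2n$ we have $2^{2^{2n+1}}$ is an integer multiple of $2^{2^{2n}}$ (indeed $2^{2^{2n+1}} = (2^{2^{2n}})^{2^{2n}}$, so in particular $2^{2^{2n}} \mid 2^{2^{2n+1}}$). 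This means the vertical line in $Y_n$ is long enough to contain a copy of the vertical line of $X_n$.

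The main idea is that the horizontal edges of $X_n$ occur at heights that are multiples of $2^{2^{2n}}$, while those of $Y_n$ occur at heights that are multiples of $2^{2^{2n+1}}$, which is a \emph{sparser} set. So a naive ``identity'' map does not send horizontal edges of $X_n$ to edges of $Y_n$. To fix this, I would define the wiring $\phi$ on vertices by $\phi(i_n, j_n) = (i_{n_y}, j_{n_y})$ — that is, the literal identity on coordinates — and check: vertical edges of $X_n$ map to vertical edges of $Y_n$ (fine, since $Y_n$ is at least as tall); a horizontal edge of $X_n$ at height $j = 2^{2^{2n}}m$ must be mapped to a \emph{path} in $Y_n$ from $(i_{n_y}, j_{n_y})$ to $(i+1_{n_y}, j_{n_y})$. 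Since $Y_n$ only has horizontal edges at multiples of $2^{2^{2n+1}}$, this path must go down (or up) to the nearest such multiple, across one horizontal edge, and back. Concretely, route it via $(i, j) \to (i, j') \to (i+1, j') \to (i+1, j)$ where $j'$ is the largest multiple of $2^{2^{2n+1}}$ not exceeding $j$ (this stays within $Y_n$ since $j < 2^{2^{2n}}f(n) \le 2^{2^{2n+1}}f(n)$, and $j' \ge 0$).

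I would then verify this is a genuine $1$-wiring: the vertex map is injective, so it is $1$-to-$1$; and I must check each edge of $Y_n$ lies in at most one of the routing paths. Distinct horizontal edges of $X_n$ at the same height $j$ have disjoint images (they use disjoint vertical segments and disjoint horizontal edges of $Y_n$, since their $i$-coordinates differ); horizontal edges of $X_n$ at different heights $j_1 \ne j_2$ could a priori conflict if they route through the same horizontal level $j'$ of $Y_n$ — here I would be slightly more careful and route the edge at height $j = 2^{2^{2n}}m$ using the vertical segment strictly between $j$ and $j'$ but offset, or simply observe that the vertical segments used lie in disjoint half-open intervals of heights and the relevant horizontal edge of $Y_n$ at level $j'$ is only crossed by edges routed from the unique $X_n$-height in its ``block'' — actually since consecutive multiples of $2^{2^{2n+1}}$ are very far apart and multiples of $2^{2^{2n}}$ within one such block all round down to the same $j'$, a cleaner routing is needed. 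I would instead route the horizontal $X_n$-edge at height $2^{2^{2n}}m$ by going to level $j'$ along column $i$, crossing to column $i+1$, and returning, but choosing the construction so that the columns used are exactly $i$ and $i+1$; conflicts on the horizontal edge of $Y_n$ at level $j'$ between column $i$ and $i+1$ then come only from $X_n$-edges with those exact endpoints, and two such $X_n$-edges at heights $2^{2^{2n}}m_1 \ne 2^{2^{2n}}m_2$ in the same block would indeed both cross it — so the truly clean fix is to use a horizontal edge of $Y_n$ at level $j'$ in a \emph{column depending on $m$}, shifting horizontally first. The main obstacle is precisely this bookkeeping: arranging the routing paths so that no horizontal edge of $Y_n$ is overused, which is possible because $Y_n$ is exponentially taller and hence has exponentially many free columns and levels to absorb the $X_n$-structure, but it requires writing down the routing function carefully rather than appealing to a count.

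Once the wiring $\phi: X_n \to Y_n$ is verified to be a $1$-wiring, its volume is at most $|Y_n| = \voly{n}$. For an arbitrary subgraph $\Gamma \le X_n$, restricting $\phi$ to $\Gamma$ gives a $1$-wiring of $\Gamma$ whose volume only decreases, so the bound $\voly{n}$ holds for all subgraphs of $X_n$, completing the proof.
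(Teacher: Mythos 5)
Your construction has a genuine gap, and it is not just the bookkeeping you flag at the end: it cannot be repaired while the vertex map stays the identity on coordinates. Under that vertex map, every vertical edge of $Y_n$ lying in any column at height below $\pwr{2n}f(n)$ is already the image of a vertical edge of $X_n$, because $X_n$ has vertical edges at every height $0\le j\le \pwr{2n}f(n)-1$ in each of its columns. Now take a horizontal edge of $X_n$ on a middle rung, i.e.\ at height $\pwr{2n}m$ with $1\le m\le f(n)-1$ (such rungs exist for all $n\ge 2$). Since $f(n)\le n<\pwr{2n}$, this height is not a multiple of $\pwr{2n+1}$, so the vertex $(i,\pwr{2n}m)$ of $Y_n$ has no incident horizontal edge, and the image path of that $X_n$-edge must leave its endpoint through a vertical edge of $Y_n$ at height below $\pwr{2n}f(n)$ --- an edge already used by the image of a vertical $X_n$-edge. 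Hence some edge of $Y_n$ lies in two of the edge-paths, violating the $k=1$ condition no matter how cleverly the detours are chosen. The slack you invoke also does not exist: $Y_n$ has exactly the same $f(n)$ columns as $X_n$ (only the height grows), and since $\pwr{2n}f(n)<\pwr{2n+1}$ every rung of $X_n$ above height $0$ sits inside the single block of $Y_n$ between its rungs at heights $0$ and $\pwr{2n+1}$, so there is no ``column depending on $m$'' or spare level nearby to absorb the conflicts. (A minor arithmetic slip along the way: $\pwr{2n+1}=(\pwr{2n})^2$, not $(\pwr{2n})^{2^{2n}}$; the divisibility you use is nevertheless true.)

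The paper's proof is one line: $Y_n$ is a subdivision of $X_n$, and the missing idea in your attempt is that the vertices must be moved, not fixed. Concretely, stretch vertically by the factor $\pwr{2n}$, sending $(i_n,j_n)\mapsto (i_{n_y},(\pwr{2n}j)_{n_y})$. Because $\pwr{2n+1}=(\pwr{2n})^2$, the rung of $X_n$ at height $\pwr{2n}m$ lands exactly on the rung of $Y_n$ at height $\pwr{2n+1}m$, so every horizontal edge of $X_n$ maps to a single horizontal edge of $Y_n$, while each vertical edge of $X_n$ maps to the vertical path of $\pwr{2n}$ edges between consecutive image heights in its column; these paths are pairwise edge-disjoint and disjoint from the horizontal edges used, and the vertex map is injective. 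This is a $1$-wiring of $X_n$ into $Y_n$ of volume at most $|VY_n|=\voly{n}$, and restricting it to any subgraph of $X_n$ proves the lemma.
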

\begin{proof}
This follows immediately from the fact that $Y_n$ is a subdivision of $X_n$.
\end{proof}

When considering $f(n)$-wirings we get a stronger upper bound.

\begin{lemma}
Every subgraph of $X$ with at most $\volx{n}$ vertices admits a $f(n)$-wiring in $Y$ with volume at most $2\volx{n},$ given $f(n)\geq2 \quad (\iff n \neq 1).$ Hence,
\[
 \wir^{f(n)}_{X\to Y}(\volx{n})\leq 2\volx{n}.
\]
\end{lemma}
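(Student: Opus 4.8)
The plan is to reduce to connected subgraphs, split into cases by ``width'', and add up. A wiring of a finite graph can be built from wirings of its connected components mapped into pairwise disjoint subgraphs of $Y$: since the images are disjoint, the $k$-to-$1$ and edge-congestion conditions hold componentwise and volumes add. As $\Gamma$ is finite with each component inside some $X_m$, it suffices to produce, for each connected $\Gamma'\le X_m$ with $|\Gamma'|\le\volx n$, an $f(n)$-wiring of $\Gamma'$ into $Y$ of volume $\le|\Gamma'|$ — except in one case, where the volume will be $\le\voly m$ for some $m\le n-1$ — and then to check the totals. Write $k=f(n)\ge 2$ and $L_m=\pwr{2m}$, so $X_m$ is the grid on $f(m)$ columns with its $f(m)+1$ rungs at heights $0,L_m,\dots,f(m)L_m$, and $Y_m$ is the same with $L_m$ replaced by $L_m^2=\pwr{2m+1}$. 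A connected $\Gamma'\le X_m$ occupies a block of $c$ consecutive columns (a path between two columns must run along rungs through every column in between) and an interval of heights of length $H\le|\Gamma'|-1$; also $c\le|\Gamma'|$.

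If $c\le k$, collapse the columns onto one: $(i,h)\mapsto(0,h)$. Vertical edges map to vertical edges and rung edges to single points; for every height strictly below the topmost one occupied by $\Gamma'$ there is a vertical edge of $\Gamma'$ crossing it (otherwise $\Gamma'$ would be disconnected), so the image is exactly the straight path on the heights $\Gamma'$ occupies, with $H+1\le|\Gamma'|$ vertices, $\le c\le k$ preimages per vertex and $\le c\le k$ edge-paths through each edge. Laid inside a column of $Y_n$ (length $kL_n^2\ge\volx n>H$) this is the desired $k$-wiring. In particular this covers every component inside $X_n$, since $X_n$ has $k$ columns.

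Now suppose $c>k$, so $f(m)>k$. If moreover $H<L_m$, then, the rung heights being $L_m$ apart, $\Gamma'$ meets exactly one rung height $r_0L_m$; all its rung edges lie there, and connectedness forces $\Gamma'$ to be a comb: the full sub-rung through its $c$ columns at height $r_0L_m$ (the spine), together with, in each of those columns, a vertical segment of lengths $a_j$ (down) and $b_j$ (up) through the spine vertex. Since $f$ is surjective with $|f^{-1}(c)|=\infty$, choose a large $m'$ with $f(m')\ge c$ and $L_{m'}^2\ge|\Gamma'|$; send the spine to $c$ consecutive vertices of the bottom rung of $Y_{m'}$, and send both teeth at each such vertex upward along the corresponding column of $Y_{m'}$. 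Each column used then carries $\le 2\le k$ edge-paths with $\le 2\le k$ preimages per vertex, the rung carries one edge-path per spine edge, and the image has $c+\sum_j\max(a_j,b_j)\le c+\sum_j(a_j+b_j)=|\Gamma'|$ vertices. If instead $H\ge L_m$, then $|\Gamma'|>L_m$; but $|\Gamma'|\le\volx n<\pwr{2(n+1)}=L_{n+1}$ (one line, using $f(m)\le m$), so $L_m<L_{n+1}$, forcing $m\le n$, and $m=n$ is impossible since it would give $c\le f(n)=k$; so $m\le n-1$, and Lemma~\ref{lemma-subdivision} applied with $m$ in place of $n$ (as $Y_m$ is a subdivision of $X_m$) yields a $1$-wiring of $\Gamma'$ into $Y_m$ of volume $\le\voly m$.

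Finally, put the images from the first two cases into pairwise disjoint parts of $\bigcup_{m'\ge n}Y_{m'}$, and for each $m\le n-1$ use the single subdivision map $X_m\to Y_m$ for all last-case components inside $X_m$ (distinct components have disjoint images). The total volume is at most $\sum|\Gamma'|+\sum_{m=1}^{n-1}\voly m\le\volx n+\sum_{m=1}^{n-1}\voly m$, and since $\voly m/\voly{m-1}\ge 2$ and a short computation gives $2\voly{n-1}\le\volx n$, this is $\le 2\volx n$. The crux is the comb case: recognising that a wide component of small vertical extent must be a comb, and routing its possibly very many teeth without congestion — which is exactly why the target must be a component $Y_{m'}$ of $Y$ with at least as many columns as $\Gamma'$ uses, and where surjectivity of $f$ enters. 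A secondary point is the harmless collapsing of rung edges to single vertices in the first case.
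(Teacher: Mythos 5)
Your argument is correct, and it uses the same two building blocks as the paper --- the column-collapse wiring (the paper's map $r:(i_m,j_m)\mapsto(0_{m_y},j_{m_y})$) and the subdivision $1$-wiring of Lemma \ref{lemma-subdivision} --- but it organises the proof differently and is more complete at the delicate point. The paper decomposes $\Gamma$ according to which $X_{n_i}$ its pieces lie in and compares $n_i$ with $n$: for $n_i<n$ it uses the subdivision wiring together with the geometric-series type estimate $\sum_{n_i\le n-1}\voly{n_i}\le\volx{n}$, for $n_i=n$ it uses the collapse $r$, and for $n_i>n$ it invokes a $1$-wiring ``$\phi$ as described in the proof of lemma 2.5'' whose only stated property is $\vol(\phi(\Gamma_i))\le\vol(\Gamma_i)$; that reference is dangling, and constructing such a low-volume, low-congestion wiring of a small subgraph of a very large $X_{n_i}$ is exactly the nontrivial step. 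You instead classify each connected component by the number $c$ of columns it meets and its height $H$: narrow components ($c\le f(n)$, which covers everything in $X_n$) are collapsed onto a single column of $Y_n$ with congestion $\le c$ and no increase in volume; wide short components are shown to be combs and rerouted, with congestion $2\le f(n)$ and volume at most their own vertex count, into a component $Y_{m'}$ with $f(m')\ge c$, whose existence uses that $f$ attains every value $\ge 2$ infinitely often; wide tall components are forced to lie in some $X_m$ with $m\le n-1$ and are handled by the subdivision wiring with the same estimate $\sum_{m\le n-1}\voly{m}\le\volx{n}$ as the paper. Your comb analysis thus supplies precisely the wiring the paper leaves unspecified in its $n_i>n$ case, and the totals $\volx{n}+\volx{n}=2\volx{n}$ give the stated bound. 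The only point to make fully explicit is the disjointness bookkeeping: the finitely many case-1 paths fit in a single column of $Y_n$ laid end to end, each comb should be sent to its own $Y_{m'}$ with $m'>n$, and the last case uses only $Y_m$ with $m\le n-1$, so the three families of images can indeed be kept pairwise disjoint, which is what makes the congestion bounds add up correctly.
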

\begin{proof}
We shall now define a wiring then show it is as required. So let me define $r:X \to Y$. Let $v \in VX$ it follows that there exists unique $n$ such that $v \in VX_n$ hence $v=(i_n,j_n)$ for some $i_n,j_n.$ We define $r$ on the vertices by
$$r(v) = r((i_n,j_n)) = (0_{n_y}, j_{n_y}).$$
Let $e=vw \in EX$ then, again, there exists unique $n$ such that $e \in EX_n$, so $e=(i_n,j_n)(i_n, j_n +1)$ or $e=(i_n,j_n)(i+1_n,j_n)$ for some $i_n,j_n$. We define $r$ on the edges as follows:
$$r(e)=r(vw) =r((i_n,j_n)w)=\bigg\{ \begin{array}{cc}
    (0_{n_y},j_{n_y}) & \text{ if }  w=(i+1_n,j_n)  \\
    r(v)r(w) & \text{ otherwise }
\end{array}$$
i.e. all `horizontal' edges are mapped to a point as the ends of all such edges have been mapped to the same point by $r$ and `vertical' edges are mapped to one edge in the only way possible.
To see this is a valid wiring note that if $e$ is a `horizontal' edge in $X_n$ then $e=vw$ where $v,w$ have the same `$y$-coordinate value hence are mapped to the same point, and so is the edge between them, by definition of the mappings of `vertical' edges they're valid. \\
It is not hard to see that for all $\Gamma $ where $\Gamma$ is a subgraph of $X$, $vol(r(\Gamma)) \leq vol(\Gamma)$ as no edge in $\Gamma$ is mapped to a path longer than one edge in $Y$ so we have no `added' extra vertices from paths, equivalently $\forall v \in Vr(\Gamma), \exists v' \in V\Gamma $ so $r(v')=v.$ 
\\
To see this is a $f(n)-$wiring we'll first see that if $\Gamma=X_n$ for some $n\in \mathbb{N}$ then we have an $f(n)$-wiring.
Let $v \in Vr(\Gamma)$ it follows that $v=(0_{n_y},j_{n_y})$ for some $j_{n_y}\in \{0_{n_y}, \ldots, \pwr{2n+1}f(n)-1_{n_y}\}.$ Then $$r^{-1}(v)=\{(x,j_{n}) \: | x \in \{0_n, \ldots, f(n)-1_n \}\}$$
so we see there are $f(n)$ options for $x$, hence $|r^{-1}(v)|=f(n)-1.$ As $v$ was chosen arbitrarily this holds for all such vertices. Now let $e\in Er(\Gamma)$, we want to show $\leq f(n)$ $e' \in E\Gamma$ such that $e \in r(e'),$ but as $r(e')$ is a single edge or vertex $\forall$ $e' \in E\Gamma$ we have that $e \in r(e') \iff e=r(e').$ So it suffices to prove $|r(e')| \leq f(n) \: \forall e' \in E\Gamma$ and only where $r(e') \in EY$ as we do not care how many edges are mapped to a vertex in the definition of a coarse $k-$wiring. So suppose $r(e') \in EY,$ ($r(e')$ fixed) then we have that $r(e') = (0_{n_y},j_{n_y})(0_{n_y},j+1_{n_y})$ for some $j_{n_y} \in \{0_{n_y},\ldots, \pwr{2n+1}-1_{n_y} \}$ (without loss of generality we can put the vertices in this order). It follows that if $e \in r(e'),$ $e$ is of the form $(i_n,j_n)(i_n,j+1_n)$ where $j_n$ and hence $j+1_n$ are fixed. So only $i_n$ is variable as we know $\forall v \in VX_n$ the first coordinate can only take $f(n)$ values, namely $0_n, \ldots, f(n)-1_n$, hence $|r(e')|\leq f(n).$ So where $\Gamma=X_n$ it is an $f(n)$-wiring. This immediately gives that if $\Gamma$ is a subgraph of $X_n$ for some $n\in \mathbb{N}$ which is the same $n$ such that $\volx{n}$ is the smallest such that $\volx{n} \geq vol(\Gamma),$ $r$ is a valid $f(n)-$wiring. And we see $$vol(r(\Gamma))=|Vr(\Gamma)|=|\{(0,j_{n_y})\: :\: 0\leq j_n \leq \pwr{2n}f(n)\}| = \pwr{2n}f(n)+1.$$ Hence $r$ is as required. 
\\
But what if $\Gamma$ is composed of many components subgraphs of many $f(n)$s, or even $\Gamma$ is the subgraph of one $X_n$ but this is not the same $n$ such that $\volx{n}$ is the smallest such that $vol(\Gamma) \leq \volx{n}?$ 
Suppose $\Gamma$ is as above, we split $\Gamma$ into $m\leq b$ subgraphs $\Gamma_1, \ldots, \Gamma_m$ such that $\Gamma_i$ is a subgraph of $X_{n_i}$ for some $n_i\in \mathbb{N}$ and there does not exist any other subgraph of $X_{n_i}$ in any other of the $\Gamma_js.$ For ease we enumerate them so that $n_1 < n_2 < \ldots < n_m.$ We fix $n$ to be as described just beneath the lemma, the smallest such that $vol(\Gamma)\leq \volx{n}.$ We now split into three scenarios: $n_m <n,$ $n_m = n,$ and $n_m>n.$

Let $n_m<n,$ with some though we can see that this implies $m<n$ as at it's maximum we have $n_1=1,n_2=2, \ldots, n_m=m<n.$ We also see $vol(\Gamma_i)\leq \volx{n_i}$ for all $i$ as $\Gamma_i$ is a subgraph of $X_i.$ Using the 1-wiring from Lemma \ref{lemma-subdivision} which we know gives $vol(h(\Gamma_i))\leq \voly{n_i}.$ Hence:
\begin{align*}
    vol(h(\Gamma))&=\sum_{i=1}^m vol(h(\Gamma_i)) \\
    &\leq \sum_{i=1}^m (\pwr{2n_i+1}f(n_i)+1)f(n_i) \\
    &\leq \sum_{i=1}^m (\pwr{2n_i+1}(n_i)+1)(n_i\\
    &\leq \sum_{i=1}^m (\pwr{2n_i+1}(n-1)+1)(n-1)\\
    &\leq \sum_{i=1}^m (\pwr{2n-1}(n-1)+1)(n-1) \\
    &\leq (\pwr{2n-1}(n-1)+1)(n-1)^2\\
    &\leq \pwr{2n} \leq \volx{n}
\end{align*}
As $h$ is a 1-wiring it must also be a $f(n)$-wiring so $h$ is as required in this case and we are done.

Now the case where $n_m= n$, like above we apply $h$ to $\Gamma_1,\Gamma_2,\ldots,\Gamma_{m-1}$ and we apply $r$ to $\Gamma_m,$ call this map $\zeta.$ Then 
\begin{align*}
    vol(\zeta(\Gamma))&=\sum_{i=1}^m vol(\zeta(\Gamma_i)) \\
    &= vol(r(\Gamma_m))+\sum_{i=1}^{m-1} vol(h(\Gamma_i))\\
    &\leq \pwr{n}f(n)+1 + \sum_{i=1}^{m-1} \voly{n_i}\\
    &\leq \pwr{n}f(n)+1 + \sum_{i=1}^{m-1} (\pwr{2(n-1)+1}(n-1)+1)(n-1)\\
    &\leq \pwr{n}f(n)+1 +(\pwr{2(n-1)+1}(n-1)+1)(n-1)^2\\
    &\leq \pwr{2n}f(n)+1 + \pwr{2n} \\
    &\leq \pwr{2n}f(n)^2 +1 \\
    &\leq \pwr{2n}f(n)^2 +f(n) =\volx{n}
\end{align*}
So $\zeta$ is as required.
 
Now let's consider $n_m>n.$  We simply use the wiring $\zeta$ from above on $\Gamma_1, \ldots, \Gamma_j$ where $n_j \leq m < n_{j+1}$ and the 1-wiring $\phi$ as described in the proof of lemma 2.5 on $\Gamma_{j+1},\ldots,\Gamma_m.$ Let's call this map $\zeta',$ then:
\begin{align*}
    vol(\zeta'(\Gamma))&=\sum_{i=1}^j vol(\zeta(\Gamma_i))+\sum_{i=j+1}^m vol(\phi(\Gamma_i)) \\
    &\leq \volx{n} + \sum_{i=j+1}^m vol(\Gamma_i)\\
    &\leq \volx{n}+\volx{n}\\
    &= 2\volx{n}
\end{align*}
Which is as required, so as we have covered all possible situations we are done.
\end{proof}

\newpage
\def\cprime{$'$}


\begin{thebibliography}{CDSW05}

\bibitem[BH21]{Barrett-Hume}
B.~Barrett and D.~Hume.
\newblock Thick embeddings of graphs into symmetric spaces via coarse geometry.
\newblock Preprint available from arXiv:2112.05305.

\bibitem[BST12]{BenSchTim}
I.~Benjamini, O.~Schramm, and {\'A}.~Tim{\'a}r.
\newblock On the separation profile of infinite graphs.
\newblock {\em Groups Geom. Dyn.}, 6(4):639--658, 2012.

\bibitem[GG12]{GG}
M.~Gromov and L.~Guth.
\newblock Generalizations of the Kolmogorov-Barzdin embedding estimates.
\newblock \emph{Duke Math. J.}, 161(13):2549--2603, 2012.


\bibitem[KB93]{KB}
A.~N.~Kolmogorov and Y.~M.~Barzdin.
\newblock On the realization of networks in three-dimensional space.
\newblock In \emph{Selected Works of Kolmogorov}, Kluwer,
Dordrecht, 3:194--202, 1993.
\end{thebibliography}
\end{document}